\newtheorem{theorem}{Theorem}[section]
\newtheorem{corollary}{Corollary}[section]
\newtheorem{definition}{Definition}[section]
\newtheorem{lemma}{Lemma}[section]
\newtheorem{proposition}{Proposition}[section]
\numberwithin{equation}{section}
\newcommand{\A}{{\mathcal A}}
\newcommand{\E}{{\mathcal E}}
\newcommand{\M}{{\mathcal M}}
\newcommand{\D}{{\mathcal D}}
\newcommand{\8}{\infty}
\newcommand{\be}{\begin{eqnarray*}}
\newcommand{\ee}{\end{eqnarray*}}
\newcommand{\beq}{\begin{equation}}
\newcommand{\eeq}{\end{equation}}
\newcommand{\beqn}{\begin{equation*}}
\newcommand{\eeqn}{\end{equation*}}
\newcommand{\bsp}{\begin{split}}
\newcommand{\esp}{\end{split}}
\begin{document}

\begin{frontmatter}



\title{On  noncommutative weak Orlicz-Hardy spaces}

\author[mymainaddress]{Turdebek N. Bekjan\corref{mycorrespondingauthor}}
\ead{bekjant@yahoo.com}
\cortext[mycorrespondingauthor]{Corresponding author}
\author[mysecondaryaddress]{Madi Raikhan}

\ead{raikhan$_-$m@enu.kz}

\address[mymainaddress]{Faculty of Mechanics and Mathematics, L.N. Gumilyov Eurasian National University,  Nur-Sultan 010008, Kazakhstan.}
\address[mysecondaryaddress]{Astana IT University,  Nur-Sultan 010000, Kazakhstan}

\begin{abstract}

We introduce noncommutative  weak Orlicz spaces associated with a weight and study their properties. We also define noncommutative weak Orlicz-Hardy spaces and characterize their dual spaces.
\end{abstract}

\begin{keyword}
 noncommutative Lorentz space,  noncommutative Marcinkiewicz space,  weak noncommutative Orlicz space, noncommutative weak Orlicz-Hardy space.

 \MSC[2010] 46L52, 47L05.

\end{keyword}

\end{frontmatter}

\section{Introduction}
Al-Rashed and Zegarli$\rm\acute{n}$ski\cite{AZ} introduced  the noncommutative Orlicz spaces associated
to a normal faithful state on a semifinite von Neumann algebra. In \cite{ACA},  the authors considered a certain class of noncommutative Orlicz spaces,
associated with arbitrary faithful normal locally-finite weights on a semi-finite von Neumann algebra $\M$. In \cite{LHW}, the authors have investigated weak version of Orlicz spaces and proved the Burkholder-Gundy inequalities of martingales for this weak Orlicz spaces. The weak noncommutative Orlicz spaces were investigated in \cite{BCLJ} and it was used for the theory of noncom- mutative martingales. In this paper, we extend the results of \cite{ACA} to the weak noncommutative Orlicz space case.

The dual spaces of commutative weak $L_p$-spaces were characterized in \cite{Cw1,Cw2}, its noncommutative versions proved in \cite{C2,HS}. In \cite{C1,C2}, Ciach introduced  noncommutative Lorentz space and  noncommutative Marcinkiewicz space, and discussed  their dual spaces. The aim of this paper is to  define noncommutative weak Orlicz-Hardy spaces and characterize their dual spaces.

The paper is organized as follows.
In Section $2$ some necessary definitions and notations are collected including the  weak Orlicz spaces and the noncommutative weak Orlicz spaces. Using relationship between noncommutative weak Orlicz spaces and noncommutative Marcinkiewicz space, and Ciach's results to give dual spaces of  weak noncommutative Orlicz spaces. The noncommutative  weak Orlicz spaces associated with a weight are studied in Section $3$.
In Section $4$, we characterized  the dual spaces of noncommutative weak Orlicz-Hardy spaces.

\section{Preliminaries}

Let $\Omega= [0,\gamma)\;(0<\gamma\le\8)$  be equipped with the usual Lebesgue measure $\mu$. We denote by $L_0(\Omega)$ the space of $\mu$-measurable real-valued functions $f$ on $(\Omega)$ such that $\mu(\{\omega \in \Omega :\; |x(\omega)| > s\})<\8$ for some $s$. The decreasing rearrangement function $f^*: [0, \8) \mapsto [0, \8]$ for $f \in L_0 (\Omega)$ is defined by
$$
f^*(t) = \inf\{s > 0 : \; \mu ( \{\omega \in \Omega :\; |f (\omega)| > s\}) \le t\}
$$
for $t \ge 0$.

The  classical weak $L_p$-space $L_{p,\8}(\Omega)\;(0<p<\8)$ is defined as the set of all measurable
functions $f$ on $\Omega$ such that
\be
\|f\|_{Lp,\8}=\sup_{t>0}t^\frac{1}{p}f^*(t)<\8.
\ee
  However, for $p>1$
$L_{p,\8}(\Omega)$ can be renormed as a Banach space by
\be
f \mapsto \sup_{t >0} t^{-1+ \frac{1}{p}} \int_{0}^{t}f^*(s)d s.
\ee
We refer to \cite{G} for
more information about weak $L_p$-spaces.

A function $\Phi:(-\8,\8)\rightarrow [0,\8) $ is  called an N-function if it satisfies the following conditions:
(i) $\Phi$ is even and convex, (ii)
$\Phi(t)=0$~iff~$t=0$,
(iii)
$\lim_{t\rightarrow0}\frac{\Phi(t)}{t}=0,~\lim_{t\rightarrow
\infty}\frac{\Phi(t)}{t}=+\infty$.

Let $\phi(t)$ be the left derivative of $\Phi$. Then
$\phi(t)$  is left  continuous, nondecreasing on  $(0,\infty)$ and  satisfies:  $0<\phi(t)<\infty$
for  $0<t<\infty$,  $\phi(0)=0$ and $\lim_{t\rightarrow
\infty}\phi(t)=\infty$.  The left
inverse of $\phi$ ($\psi(s)=\inf\{t>0:\phi(t)>s\}$ for $s>0$) will be denoted by $\psi$. We define a complementary N-function $\Psi$ of  $\Phi$ by
\be
\Psi(|s|)=\int_{0}^{|s|}\psi(v)\,dv.
\ee
It is clear that $\Phi$ is the complementary N-function  of $\Psi$.
We call $(\Phi,\Psi)$ is a pair of complementary N-functions.

 Let $(\Phi,\Psi)$ be a pair of complementary N-functions, with inverses $\Phi^{-1},\;\Psi^{-1}$ (which are uniquely defined on $[0,\8)$). Then
 \beq\label{eq:inverses}
 t<\Phi^{-1}(t)\Psi^{-1}(t)<2t,\qquad t>0.
 \eeq

 An N-function $\Phi$ is said to satisfy the $\bigtriangleup_{2}$-condition  for all $t$, written  as $\Phi\in\bigtriangleup_{2}$, if there is $K>2$ such
 that $\Phi(2t)\leq k\Phi(t)$ for  all  $t\ge0 $. $\Phi$ is called  to satisfy the $\bigtriangledown_{2}$-condition
 for all $t$, written  as $\Phi\in\bigtriangledown_{2}$, if there is a constant $c>1$ such that $\Phi(t) \leq\frac{1}{2c}\Phi(ct)$ for all $t\ge0$.
For  a pair of complementary N-functions $(\Phi,\Psi)$, we have that $\Phi\in\bigtriangleup_{2}$ if and only if
$\Psi\in\bigtriangledown_{2}$ (see \cite[Theorem 2]{RR}).

 Let $(\Phi,\Psi)$ is a pair of complementary N-functions.  Then the Orlicz space on $\Omega$  associated with  $\Phi$ defined by
\be
L_{\Phi}(\Omega)=\bigg \{f\in
L_0(\Omega):\;\int_{0}^{\8}\Phi(|af(t)|)dt<\infty \; \mbox{for  some}\; a>0\bigg \}.
\ee
We define
\be
\|f\|_{\Phi}=\inf\bigg \{c>0:\;\int_{0}^{\8}\Phi(|\frac{f(t)}{c}|)dt\le1\bigg \}.
\ee
Then for any  $f\in L_{\Phi}(\Omega)$,
\be
\|f\|_{\Phi}\le\sup\bigg \{|\int_{0}^{\8}f(t)g(t)dt|: \int_{0}^{\8}\Psi(|g(t)|)dt\le1,\; g\in L^\Psi(\Omega)\bigg \}\le2\|f\|_{\Phi}.
\ee

For an N-function  $\Phi$, we define
\be
a_{\Phi} = \inf_{t>0} \frac{t \Phi'(t)}{\Phi (t)}\quad \text{and} \quad b_{\Phi} = \sup_{t>0} \frac{t \Phi'(t)}{\Phi (t)}.
\ee
Then  $1 \le a_{\Phi} \le  b_{\Phi} \le \8$ and $\Phi \in \triangle_2$  if and only if $b_{\Phi}< \8$. It is well-known that
\beq\label{eq:orlicz-dual}
L_{\Phi}(\Omega)^*=L_{\Psi}(\Omega),\qquad L_{\Phi}(\Omega)^*=L_{\Psi}(\Omega),
\eeq
with equivalent norms. We refer to \cite{RR} for the details on Orlicz spaces.

Now we consider the set of all measurable functions
$$
L_{\Phi,\8}(\Omega)=\{f\in
L_0(\Omega):\; \exists c>0, \Phi(\frac{t}{c})\mu(|f|>t)\leq1, \forall
t>0\}
$$
and denote
\be \|f\|_{\Phi,\8} = inf\{c>0:
\Phi(\frac{t}{c})\mu(|f|>t)\leq1, \forall t>0\}.
\ee
We call $L_{\Phi,\8}(\Omega)$  is a weak Orlicz space. If $\Phi(t)=t^p,$ then
$L_{\Phi,\8}(\Omega)=L_{p,\8}(\Omega)$ (see \cite{LHW} for more details).

Recall that
\be
L_{\Phi,\8}(\Omega) = \big \{ x \in L_{0}(\Omega):\; \exists\; c>0 \;\text{such that}\; \sup_{t > 0} t \Phi( f^*(t)/c ) < \8 \big \},
\ee
and
\be\begin{array}{rl}
\|x\|_{\Phi,\8} & = \inf \big \{ c>0:\; t \Phi(f^*(t)/c ) \leq 1, \forall t>0 \big \}\\
   & = \inf \big \{ c>0:\; \frac{1}{\Phi^{-1}(\frac{1}{t})} \mu_t (x)/c  \leq 1, \forall t>0 \big \}
   \end{array}
\ee
(see \cite[Proposition 3.1]{BCLJ}).

\subsection{Noncommutative weak $L_p$ spaces}\label{NcWeakLpSpa}

We keep all notations introduced in the above. In rest of this paper, $\Phi$ will always denote  an N-function and $\Psi$ denote a complementary N-function of  $\Phi$,   $\M$ always denote  a semifinite von Neumann algebra acting on a Hilbert space
$\mathbb{H}$ with a normal semifinite faithful trace $\tau\;(\tau(1)=\gamma)$.

For $0<p<\infty$ let $L_{p}(\M)$ denote the noncommutative $L_p$ space with respect to $(\M, \tau).$
As usual, we set $L_{\infty}(\M,\tau)=\M$ equipped with the operator norm. Also, let $L_{0}(\mathcal{M})$ denote the topological $*$-algebra of
measurable operators with respect to $(\M, \tau).$

For $x\in L_{0}(\M)$, we define
\be
\lambda_{s}(x)=\tau(e^{\perp}_s (|x|))\;(s>0)\; \; \text{and}\; \; \mu_t (x) = \inf \{ s>0:\;
\lambda_s (x) \le t \}\; (t >0),
\ee
where $e_s^{\perp} (|x|) = e_{(s,\infty)}(|x|)$ is the spectral projection of $|x|$ associated with the interval $(s,\infty)$. We call the function $s\mapsto\lambda_{s}(x)$ is the distribution function of $x$ and $\mu_{t}(x)$  is the generalized singular number of $x$. For simplicity, we denote  by $\lambda (x)$ and  $\mu(x)$ the two functions $s \mapsto \lambda_s (x)$ and $t \mapsto \mu_t (x),$ respectively. It is clear that both functions $\lambda (x)$ and $\mu(x)$ are decreasing and continuous from the right on $(0,\infty)$ (for further information, see  \cite{FK}).


For $0< p < \8,$ the noncommutative weak $L_p$ space $L_{p,\8}(\mathcal{M})$ is defined as the space
of all measurable operators $x$ such that
\be\begin{split}
\|x\|_{p,\8}= \sup_{t > 0} t^{\frac{1}{p}}\mu_{t}(x) < \8.
\end{split}\ee
Equipped with $\|.\|_{L_{p,\8}},$ $L_{p,\8}(\mathcal{M})$ is a quasi-Banach space. However, for $p>1$
$L_{p,\8}(\mathcal{M})$ can be renormed as a Banach space by
\be
x \mapsto \sup_{t >0} t^{-1+ \frac{1}{p}} \int_{0}^{t}\mu_{s}(x)d s.
\ee
On the other hand, the quasi-norm admits the following useful description
\beq\label{eq:WeakLpNormMu}
\|x\|_{p,\8} = \inf \big \{c>0:\; t ( \mu_t (x)/c )^p \le 1,\; \forall t >0 \big \}.
\eeq
Also, we have a description in terms of distribution function as follows
\beq\label{eq:WeakLpNormLmbda}
\|x\|_{p,\8} = \sup_{s > 0} s \lambda_s (x)^{\frac{1}{p}}.
\eeq

Recall that noncommutative weak $L_p$ spaces can be presented through noncommutative Lorenz spaces, for details see  \cite{DDP2} and Xu \cite{X}.

\subsection{Noncommutative  weak Orlicz spaces }

Let
\be
L_\Phi(\M)=\{x\in
L_0(\M):\tau(\Phi(|x|))=\int_0^{\tau(1)}\Phi(\mu_t(x))dt<\infty\}
\ee
and
$$
\|x\|_{\Phi}=\inf\{\lambda>0:\tau(\Phi(\frac{|x|}{\lambda}))\le1\}, \quad  \forall x\in
L_\Phi(\M).
$$
Then $L_\Phi(\M)$ is a Banach  space. We call it is
the noncommutative Orlicz space on $(\M,\tau)$.

\begin{definition}\label{weakOrlicz spaces} The noncommutative weak Orlicz space $L_{\Phi,\8} (\M)$ is defined as following:
\be
L_{\Phi,\8} (\M) = \left\{ x \in L_{0}(\mathcal{M}):\;  \sup_{t > 0}
t \Phi( \mu_t (x)) < \8 \right\},
\ee
equipped with
\be
\|x\|_{\Phi,\8} = \inf \left \{c>0:\; t \Phi( \mu_t (x)/c ) \leq 1,\; \forall t>0 \right\}.
\ee
\end{definition}
If $\Phi(t)=t^p$ with $1\le p < \8$,  then $L_{\Phi,\8}(\M)$ is the noncommutative weak $L_p$-space.

Recall that $L_{\Phi,\8} (\M)$  is a quasi-Banach space, and for any $x\in L_{\Phi,\8} (\M)$
\be
\|x\|_{\Phi,\8} = \inf \left\{c>0:\; \frac{1}{\Phi^{-1}(\frac{1}{t})} \mu_t (x)/c  \leq 1,\; \forall t>0 \right\}=\sup_{t>0}\frac{1}{\Phi^{-1}(\frac{1}{t})} \mu_t (x).
\ee
For any $c>0$ we have that
\beq\label{eq:DistrSingularEquiv}
\sup_{t >0} t \Phi (\mu_t (x)/c) =\sup_{s > 0} \lambda_s (x) \Phi (s /c),\;\forall x \in L_0 (\M).
\eeq
For more information on noncommutative weak Orlicz spaces, see \cite{BCLJ}.

For any $x\in L_0(\M)$,  set $\tilde{\mu}_t(x)=\int_0^t\mu_s(x)ds$. Then $ \mu_t(x)\le\tilde{\mu}_t(x)$ for all $t>0$ and the map $x\mapsto\tilde{\mu}(x)$ is a sublinear operator from
$L_0(\M)$ to $L_0(\Omega)\;(a=\tau(1))$.

\begin{proposition}\label{pro:equivalent norm}
 If $\Phi$ is an Orlicz function with $1< a_{\Phi} \le b_{\Phi}<\8$, then there exists a constant $C>0$ such that
\beq\label{eq:mu-t-mu*-t-1}
\sup_{t>0}t \Phi \big [ \tilde{\mu}_t (x) \big ] \leq C \sup_{t>0} t \Phi \big [ \mu_t (x) \big ]
\eeq
for all $x \in L_{\Phi,\8}(\M)$. Consequently,
\beq\label{eq:mu-t-mu^*-t-2}
\sup_{t>0}\frac{1}{\Phi^{-1}(\frac{1}{t})} \tilde{\mu}_t (x) \le C' \sup_{t>0}\frac{1}{\Phi^{-1}(\frac{1}{t})} \mu_t (x),\quad \forall x \in L^{\Phi,\8}(\M).
\eeq
\end{proposition}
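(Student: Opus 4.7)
The plan is to reduce both inequalities to a single Hardy-type estimate on $\Phi^{-1}$ driven by $a_\Phi > 1$, and then close up using the $\Delta_2$-type growth bound supplied by $b_\Phi < \infty$. I will read $\tilde\mu_t(x)$ as the Hardy average $t^{-1}\int_0^t \mu_s(x)\,ds$, which is the interpretation compatible with the stated bound $\mu_t(x) \le \tilde\mu_t(x)$.

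First I will translate the hypotheses into pointwise growth estimates. Integrating the inequality $t\phi(t)/\Phi(t) \ge a_\Phi$ (with $\phi$ the left derivative introduced in the excerpt) over $[s,t]$ gives $\Phi(t)/\Phi(s) \ge (t/s)^{a_\Phi}$ for $t \ge s > 0$; inverting yields
\begin{equation*}
\Phi^{-1}(v) \le (v/u)^{1/a_\Phi}\,\Phi^{-1}(u), \qquad v \ge u > 0.
\end{equation*}
Symmetrically, $b_\Phi < \infty$ produces $\Phi(\lambda r) \le \lambda^{b_\Phi}\Phi(r)$ for every $\lambda \ge 1$ and $r > 0$. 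The central step is then the Hardy-type bound
\begin{equation*}
\frac{1}{t}\int_0^t \Phi^{-1}(A/s)\,ds \le K\,\Phi^{-1}(A/t), \qquad K := \frac{a_\Phi}{a_\Phi - 1},
\end{equation*}
valid for all $A,t>0$. This follows by taking $u = A/t$ and $v = A/s$ in the inverse bound to get $\Phi^{-1}(A/s) \le (t/s)^{1/a_\Phi}\Phi^{-1}(A/t)$ for $s \le t$, and integrating; the constant $K$ is finite precisely because $1/a_\Phi < 1$.

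To prove (\ref{eq:mu-t-mu*-t-1}), put $A := \sup_t t\Phi(\mu_t(x))$, so that $\mu_s(x) \le \Phi^{-1}(A/s)$, and conclude $\tilde\mu_t(x) \le K\,\Phi^{-1}(A/t)$ from the Hardy bound. Applying the $b_\Phi$-estimate with $\lambda = K \ge 1$ gives
\begin{equation*}
t\Phi(\tilde\mu_t(x)) \le K^{b_\Phi}\, t\Phi(\Phi^{-1}(A/t)) = K^{b_\Phi} A,
\end{equation*}
so the claim holds with $C = K^{b_\Phi}$. For (\ref{eq:mu-t-mu^*-t-2}), let $B := \sup_t \mu_t(x)/\Phi^{-1}(1/t)$, apply the Hardy bound with $A = 1$ to $\mu_s(x) \le B\,\Phi^{-1}(1/s)$, and read off $\tilde\mu_t(x) \le BK\,\Phi^{-1}(1/t)$; here $b_\Phi < \infty$ is not even needed, and $C' = K$ works.

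The principal obstacle I anticipate is the rigorous derivation of the pointwise power bounds on $\Phi^{-1}$ when $\Phi$ is only left-differentiable: one must express $a_\Phi$ and $b_\Phi$ through $\phi$, verify that the log-derivative integration still produces the stated power comparisons for $\Phi$ and hence for $\Phi^{-1}$, and handle the singularity at $s = 0$ in the Hardy integral. Once these routine but careful checks are in place, both inequalities are immediate consequences of the Hardy estimate.
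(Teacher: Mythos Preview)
Your argument is correct and genuinely different from the paper's. The paper proceeds by interpolation: it picks $1<p_0<a_\Phi\le b_\Phi<p_1<\infty$, quotes Bennett--Sharpley to get boundedness of the Hardy averaging operator $x\mapsto\tilde\mu(x)$ from $L_{p_i}(\mathcal{M})$ to $L_{p_i}(\Omega)$ for $i=0,1$, and then invokes an interpolation result for weak Orlicz spaces (Corollary~4.4 of Bekjan--Chen--Liu--Jiao) to transfer this to $L_{\Phi,\infty}$. You instead read off from $a_\Phi>1$ the pointwise bound $\Phi^{-1}(v)\le(v/u)^{1/a_\Phi}\Phi^{-1}(u)$ for $v\ge u$, integrate to get the Hardy inequality $\tfrac{1}{t}\int_0^t\Phi^{-1}(A/s)\,ds\le K\,\Phi^{-1}(A/t)$ with $K=a_\Phi/(a_\Phi-1)$, and close with the $\Delta_2$ growth bound from $b_\Phi<\infty$. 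Your route is self-contained and yields explicit constants ($C=K^{b_\Phi}$, $C'=K$); the paper's route is shorter on the page but leans on two external results. Your reading of $\tilde\mu_t$ as the Hardy \emph{average} $t^{-1}\int_0^t\mu_s\,ds$ is the only one consistent with the stated inequality $\mu_t\le\tilde\mu_t$ and with the corollary that follows, despite the paper's literal display; you were right to flag and resolve this. The ``obstacle'' you mention is routine: since $\Phi$ is convex it is absolutely continuous with $\Phi(t)=\int_0^t\phi$, so $(\log\Phi)'=\phi/\Phi\ge a_\Phi/t$ a.e.\ and the power comparison follows by integrating; the Hardy integral converges at $0$ precisely because $1/a_\Phi<1$.
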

\begin{proof} Let $1<p_0< a_{\Phi} \le b_{\Phi}<p_1<\8$. By \cite[Theorem {\rm III}.3.8 and {\rm III}.3.10]{BS}, the map $x\mapsto\tilde{\mu}(x)$ is bounded  from
$L_{p_i}(\M)$ to $L_{p_i}(\Omega)$, $i=0,\;1$. Using \cite[Corollary 4.4]{BCLJ}, we obtain the desired result.

\end{proof}
We use \eqref{eq:inverses} and the above proposition to obtain the following result.
\begin{corollary}\label{cor:equivalent norm}
 Let $\Phi$ be an Orlicz function with $1< a_{\Phi} \le b_{\Phi}<\8$. Set
 \be
\|x\|_{(\Phi)',\8}=\sup_{t>0}\Psi^{-1}(\frac{1}{t})\int_0^t\mu_s(x)ds,\qquad \forall x \in L^{\Phi,\8}(\M).
\ee
Then $\|x\|_{(\Phi)',\8}$ is an equivalent norm on $L_{\Phi,\8}(\M)$.

\end{corollary}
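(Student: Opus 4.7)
The plan is to sandwich $\|x\|_{(\Phi)',\infty}$ between constant multiples of $\|x\|_{\Phi,\infty}$, using \eqref{eq:inverses} to exchange the $\Psi^{-1}$--weight in the new norm for a $\Phi^{-1}$--weight (with the extra factor of $t$ absorbed by the integral) so that Proposition~\ref{pro:equivalent norm} applies directly. First I would verify that $\|\cdot\|_{(\Phi)',\infty}$ is a norm: positive homogeneity and positive definiteness come straight from the corresponding properties of $\mu(\cdot)$, while the triangle inequality is inherited from the sublinearity of $x\mapsto\tilde\mu(x)$ recorded just before Proposition~\ref{pro:equivalent norm}, since both multiplication by $\Psi^{-1}(1/t)\ge 0$ and taking a supremum in $t$ preserve sublinear bounds.

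For the equivalence, I would write $\int_0^t \mu_s(x)\,ds = t\,\tilde\mu_t(x)$ and specialize \eqref{eq:inverses} at the point $1/t$ to obtain
\[
\frac{1}{\Phi^{-1}(1/t)}\;<\;t\Psi^{-1}(1/t)\;<\;\frac{2}{\Phi^{-1}(1/t)}.
\]
Multiplying through by $\tilde\mu_t(x)$ and taking $\sup_{t>0}$ then gives
\[
\sup_{t>0}\frac{\tilde\mu_t(x)}{\Phi^{-1}(1/t)}\;\le\;\|x\|_{(\Phi)',\infty}\;\le\;2\sup_{t>0}\frac{\tilde\mu_t(x)}{\Phi^{-1}(1/t)}.
\]
The upper supremum is bounded by $C'\|x\|_{\Phi,\infty}$ directly by \eqref{eq:mu-t-mu^*-t-2}, while the lower supremum is bounded below by $\|x\|_{\Phi,\infty}$ thanks to $\tilde\mu_t(x)\ge\mu_t(x)$ (the mean of the decreasing function $\mu(x)$ over $[0,t]$ dominates its endpoint value). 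Combining these yields the sought two-sided estimate $\|x\|_{\Phi,\infty}\le\|x\|_{(\Phi)',\infty}\le 2C'\|x\|_{\Phi,\infty}$.

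The only nontrivial ingredient is the weak Hardy-type bound $\sup_t \tilde\mu_t(x)/\Phi^{-1}(1/t)\le C'\|x\|_{\Phi,\infty}$, which is precisely the content of Proposition~\ref{pro:equivalent norm} (delivered under the hypothesis $a_\Phi > 1$ via interpolation). Everything else is algebraic bookkeeping through \eqref{eq:inverses} that aligns the $t$ coming from the inverse--product estimate with the $t^{-1}$ hidden in $t\,\tilde\mu_t(x)=\int_0^t\mu_s(x)\,ds$, so I do not expect any substantive obstacle at this stage.
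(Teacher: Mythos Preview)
Your proposal is correct and follows exactly the route the paper indicates: the paper's entire proof reads ``We use \eqref{eq:inverses} and the above proposition to obtain the following result,'' and you have supplied precisely the details that unpacking entails --- rewriting $\Psi^{-1}(1/t)\int_0^t\mu_s(x)\,ds$ as $[t\Psi^{-1}(1/t)]\tilde\mu_t(x)$, applying \eqref{eq:inverses} at $1/t$ to replace $t\Psi^{-1}(1/t)$ by $1/\Phi^{-1}(1/t)$ up to a factor of~$2$, and then invoking \eqref{eq:mu-t-mu^*-t-2} together with the trivial bound $\tilde\mu_t(x)\ge\mu_t(x)$.
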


Set $\varphi(t)=1/\Psi^{-1}(\frac{1}{t})$. Then $\varphi$ is
an increasing concave function on $(0,\8)$ with $\lim_{t\rightarrow0}\varphi(t)=0$ and $\lim_{t\rightarrow\8}\varphi(t)=\8$.
Let $\Lambda_\varphi(\Omega),\;M_\varphi(\Omega)$ be the usual Lorentz
and Marcinkiewicz spaces with norms defined by
\be
\Lambda_\varphi(\Omega)=\{f\in L_0(\Omega):\;\|f\|_{\Lambda_\varphi}=\int_{0}^{\8}f^*(t)\varphi'(t)dt<\8\}
\ee
and
\be
M_\varphi(\Omega)=\{f\in L_0(\Omega):\;\|f\|_{M_\varphi}=\sup_{t>0}\frac{1}{\varphi(t)}\int_0^tf^*(t)ds<\8\}.
\ee
The Lorentz space
 $\Lambda_\varphi(\Omega)$ has order continuous norm and  $\Lambda_\varphi(\Omega)^*=M_\varphi(\Omega)$. If  $M_\varphi^0(\Omega)$ denotes the linear subspace of  $M_\varphi(\Omega)$ consisting of all  $f\in M_\varphi(\Omega)$ for which
 \be
 \limsup_{t\rightarrow0}\frac{1}{\varphi(t)}\int_0^t\mu_s(x)ds=0\quad\mbox{and}\quad  \limsup_{t\rightarrow\8}\frac{1}{\varphi(t)}\int_0^t\mu_s(x)ds=0.
 \ee
Then $M_\varphi^0(\Omega)^*=\Lambda_\varphi(\Omega)$ (for more details see \cite[Chapter {\rm II}.5]{KPS} . Since  $M_\varphi^0(\Omega)$ is separable,
\be
M_\varphi^0(\M)=\mbox{closure
 of}\;S(\M)\; \mbox{in
 }\; M_\varphi(\M).
\ee
Hence,
\be
\Lambda_\varphi(\M)^*=M_\varphi(\M),\qquad M_\varphi^0(\M)^*=\Lambda_\varphi(\M)
\ee
(see \cite[Proposition 5.3]{DDP2}, also see \cite[Theorem 2.1]{C1} and \cite[Proposition 2.1]{C2}).

 Let  $1< a_{\Phi} \le b_{\Phi}<\8$. By Proposition \ref{pro:equivalent norm},
 \be
 M_\varphi(\M)=L_{\Phi,\8}(\M).
\ee
We denote the closure
 of $S(\M)$ in $L_{\Phi,\8}(\M)$ by $L_{\Phi,\8}^0(\M)$ and $\Lambda_\varphi(\M)$ by $L_{1,\Psi}(\M)$, respectively. Then
 \beq\label{eq:weak-dual}
 L_{1,\Psi}(\M)^*=L_{\Phi,\8}(\M)\quad\mbox{and}\quad L_{\Phi,\8}^0(\M)^*=L_{1,\Psi}(\M)
 \eeq
It is clear that
\be
\lim_{t\rightarrow0}\frac{t}{\varphi(t)}=\lim_{t\rightarrow0}t\Psi^{-1}(\frac{1}{t})=0,\quad \lim_{t\rightarrow\8}\frac{t}{\varphi(t)}=\lim_{t\rightarrow\8}t\Psi^{-1}(\frac{1}{t})=\8.
\ee
We define continuous seminorms $N_0$ and $N_\8$ on $L_{\Phi,\8}(\M)$ by
 \be
N_0(x)=\limsup_{t\rightarrow0}\frac{1}{\varphi(t)}\int_0^t\mu_s(x)ds
\ee
and
\be N_\8(x)=\limsup_{t\rightarrow\8}\frac{1}{\varphi(t)}\int_0^t\mu_s(x)ds,
 \ee
for all $x\in L_{\Phi,\8}(\M)$.
Using main result in \cite{C2}, we obtain that
\beq\label{eq:dual-weak-orlicz}
 L_{\Phi,\8}(\M)^*=L_{1,\Psi}(\M)\oplus S_0\oplus S_\8,
\eeq
where
\be
\begin{array}{rl}
     S_0 &=\{\ell\in L_{\Phi,\8}(\M)^*:\;\ell\;\mbox{ annihilates all}\; x\in\M\} \\
      &=\{\ell\in L_{\Phi,\8}(\M)^*:\exists\: C>0,\;
|\ell(x)|\le CN_0(x),\quad\forall x\in L_{\Phi,\8}(\M)\},
\end{array}
\ee
\be
\begin{array}{rl}
     S_\8 &=\{\ell\in L_{\Phi,\8}(\M)^*:\;\ell\;\mbox{ annihilates all}\;  x\in L_{\Phi,\8}(\M)\;\mbox{with}\; r(x)\in S(\M)\} \\
      &=\{\ell\in L_{\Phi,\8}(\M)^*:\exists\: C>0,\;
|\ell(x)|\le CN_\8(x),\quad\forall x\in L_{\Phi,\8}(\M)\}.
\end{array}
\ee

Let
\be M(t, \varphi)= \sup_{s >0} \frac{\varphi(t s)}{\varphi(s)},\quad t >0.
\ee
Define
\be
p_{\varphi} = \lim_{t \searrow 0} \frac{\log M(t, \varphi)}{\log t}, \quad q_{\varphi} = \lim_{t \nearrow \8} \frac{\log M(t, \varphi)}{\log t}.
\ee
Then
\beq\label{eq:weak index-strong index}
[p_{\varphi},q_{\varphi} ]\subset[\frac{1}{b_\Phi},\frac{1}{a_\Phi}]
\eeq
(For more details, see \cite[Remarks 3 (p.84) and  Theorem 11.11]{Ma2} and \cite[Theorem 4.2]{Ma1}).

\section{Noncommutative  weak Orlicz spaces associated with a weight}

We denote by $L_{loc}(\M)$ by the set of all measurable
locally-measurable  operators affiliated with $\M$. It is well-known that $L_{loc}(\M)$ is a $\ast$-algebra with respect to the strong sum and strong product and
$L_{0}(\M)$  is a $*$-subalgebra in $L_{loc}(\M)$ (see \cite{MC1,MC2}). Set $M^+=\{x\in\M:\;x\ge0\}$ and $L_{loc}(\M)^+=\{x\in L_{loc}(\M):\;x\ge0\}$. Let
\be
\tilde{\tau}(x)=\sup\{\tau(y):\;y\in \M^+,\;y\le x\}, \qquad x\in L_{loc}(\M)^+.
\ee
Then $\tilde{\tau}$ is an extension of $\tau$ to $L_{loc}(\M)^+$ (see \cite[$\S$4.1]{MC2}). The extension  will be denoted still by $\tau$.

\begin{definition}
 \begin{enumerate}[\rm(i)]
 \item A {\it   weight} on $\M$ is a map $\omega: \M^+\to [0, \infty]$
satisfying
 \be
 \omega(x+\lambda y)=\omega(x)+\lambda\omega(y),\qquad\forall\;x, y\in \M^+,\ \forall\; \lambda\in\mathbb{R}
\ee
(where $0.\8=0$).
\item A  weight $\omega$ is said to be {\it normal} if
$\sup_i\omega(x_i)=\omega(\sup_i x_i)$ for any bounded increasing net
$(x_i)$ in $\M^+$, {\it faithful} if $\omega(x)=0$ implies $x=0$, {\it
semifinite} if the linear span $\M_\omega$ of the cone $\M_\omega^+=\{x \in\M^+:\; \omega(x)<\8\}$ is
dense in $\M$ with respect to the ultra-weak topology, and {\it locally finite} if for any non-zero
$x\in\M^+$ there is a non-zero $y\in\M^+$ such that $y\le x$ and
$0<\omega(y)<\infty$.
\end{enumerate}
\end{definition}
Let $\omega$  be a faithful normal semifinite weight on $\M$. Then  $\omega$  has a Radon-Nikodym derivative $D_{\omega}$ with respect to
$\tau$ such that $\omega(\cdot)=\tau(D_{\omega}\cdot)$ (see \cite{PT}).
The weight $\omega$ is locally finite if and only if the operator  $D_{\omega}$  is locally measurable (see \cite{Tr}). In the sequel, unless otherwise specified, we always denote by  $\omega$  a faithful normal locally finite weight on $\M$. Let $\Phi^{-1}:[0,\8)\rightarrow[0,\8)$ be the inverse of $\Phi$  (which is uniquely defined on $\mathbb{R}^+$).

Let
\be
\M_{\Phi,\8}^{\alpha,\omega}=\bigg \{x\in\M:\;\sup_{t>0}t \mu_t(\Phi\big(|\Phi^{-1}(D_{\omega})^\alpha x\Phi^{-1}(D_{\omega})^{1-\alpha}|))<\8\bigg \}
\ee
and
\be
\|x\|_{\Phi,\8,\alpha,\omega}=\inf\bigg \{c>0:\;\sup_{t>0}t \Phi( \mu_t (\Phi^{-1}(D_{\omega})^\alpha x\Phi^{-1}(D_{\omega})^{1-\alpha})/c)\le1\bigg \}.
\ee

\begin{lemma}
$\M_{\Phi,\8}^{\alpha,\omega}$ is linear subspace in $\M$.
\end{lemma}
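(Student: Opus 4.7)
The plan is to identify $\M_{\Phi,\8}^{\alpha,\omega}$ with $T^{-1}(L_{\Phi,\8}(\M)) \cap \M$, where $T\colon x \mapsto \Phi^{-1}(D_{\omega})^{\alpha}\, x\, \Phi^{-1}(D_{\omega})^{1-\alpha}$, and then reduce linearity of $\M_{\Phi,\8}^{\alpha,\omega}$ to linearity of $T$ together with the fact, already recalled in Section~2, that $L_{\Phi,\8}(\M)$ is itself a linear (quasi-Banach) subspace of $L_0(\M)$.

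First I would check that $T$ is a well-defined linear map from $\M$ into $L_{loc}(\M)$. Since $\omega$ is a locally finite faithful normal weight, its Radon-Nikodym derivative $D_\omega$ is a positive locally measurable operator affiliated with $\M$, so both $\Phi^{-1}(D_\omega)^{\alpha}$ and $\Phi^{-1}(D_\omega)^{1-\alpha}$ lie in $L_{loc}(\M)$ by the Borel functional calculus. Because $L_{loc}(\M)$ is a $\ast$-algebra under the strong product, the triple product defining $T(x)$ makes sense in $L_{loc}(\M)$ and depends linearly on $x \in \M$. Closure of $\M_{\Phi,\8}^{\alpha,\omega}$ under scalar multiplication is then immediate from $T(\lambda x) = \lambda T(x)$ and the homogeneity $\mu_t(\lambda y) = |\lambda|\,\mu_t(y)$, which yields $\|\lambda x\|_{\Phi,\8,\alpha,\omega} = |\lambda|\,\|x\|_{\Phi,\8,\alpha,\omega}$.

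For additivity, I would take $x, y \in \M_{\Phi,\8}^{\alpha,\omega}$ with $c_x = \|x\|_{\Phi,\8,\alpha,\omega}$ and $c_y = \|y\|_{\Phi,\8,\alpha,\omega}$, and combine the standard inequality $\mu_{2t}(T(x)+T(y)) \le \mu_t(T(x)) + \mu_t(T(y))$ with the convexity estimate
\[
\Phi\!\left(\frac{u+v}{c_x+c_y}\right) \le \frac{c_x}{c_x+c_y}\,\Phi\!\left(\frac{u}{c_x}\right) + \frac{c_y}{c_x+c_y}\,\Phi\!\left(\frac{v}{c_y}\right),
\]
applied with $u = \mu_t(T(x))$ and $v = \mu_t(T(y))$, which after an absolute-constant rescaling of $c_x+c_y$ yields $\|x+y\|_{\Phi,\8,\alpha,\omega} < \8$. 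The only delicate point I anticipate is bookkeeping for the unbounded operator $\Phi^{-1}(D_\omega)$, but once it is situated inside the $\ast$-algebra $L_{loc}(\M)$ the linearity of $T$ follows routinely, so the remaining analytic content of the proof is just the quasi-triangle inequality of the weak Orlicz functional as sketched above.
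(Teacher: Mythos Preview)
Your proposal is correct, and the underlying analytic ingredients are the same as in the paper's proof, but your organization is a bit cleaner. The paper verifies the membership condition $\sup_{t>0} t\,\Phi\big(\mu_t(\Phi^{-1}(D_\omega)^\alpha x\,\Phi^{-1}(D_\omega)^{1-\alpha})\big)<\8$ for $\eta x$ and $x+y$ directly, using Lemma~2.5 of \cite{FK}, convexity of $\Phi$, and the $\triangle_2$-condition (the latter explicitly both for $|\eta|>1$ and for the sum). You instead make the linearity of $T(x)=\Phi^{-1}(D_\omega)^\alpha x\,\Phi^{-1}(D_\omega)^{1-\alpha}$ explicit as a map $\M\to L_{loc}(\M)$, recognize $\M_{\Phi,\8}^{\alpha,\omega}=\{x\in\M:\;T(x)\in L_{\Phi,\8}(\M)\}$, and then appeal to the fact---already recalled in Section~2 from \cite{BCLJ}---that $L_{\Phi,\8}(\M)$ is a linear quasi-Banach space; this avoids redoing that work. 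Your convex-combination estimate for the quasi-triangle constant is an extra that actually bypasses $\triangle_2$ at the level of the quasi-norm, though one should note that passing from finiteness of $\|\cdot\|_{\Phi,\8,\alpha,\omega}$ back to the defining membership condition still relies on $\triangle_2$ (or, equivalently, on the quoted linearity of $L_{\Phi,\8}(\M)$).
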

\begin{proof} Let $x\in \M_{\Phi,\8}^{\alpha,\omega}$ and $\eta\in\mathbb{C}$. If $|\eta|\le1$, by Lemma 2.5 in \cite{FK} and convexity of $\Phi$,
\be
\begin{array}{rl}
&\sup_{t>0}t \mu_t(\Phi\big(|\Phi^{-1}(D_{\omega})^\alpha \eta x\Phi^{-1}(D_{\omega})^{1-\alpha}|\big))\\
&\qquad=\sup_{t>0}t \Phi\big( \mu_t(\Phi^{-1}(D_{\omega})^\alpha \eta x\Phi^{-1}(D_{\omega})^{1-\alpha})\big)\\
&\qquad=\sup_{t>0}t \Phi\big(|\eta|\mu_t (\Phi^{-1}(D_{\omega})^\alpha x\Phi^{-1}(D_{\omega})^{1-\alpha})\big)\\
&\qquad\le|\eta|\sup_{t>0}t \Phi\big(\mu_t (\Phi^{-1}(D_{\omega})^\alpha x\Phi^{-1}(D_{\omega})^{1-\alpha})\big)\\
&\qquad=|\eta|\sup_{t>0}t \mu_t(\Phi\big(|\Phi^{-1}(D_{\omega})^\alpha x\Phi^{-1}(D_{\omega})^{1-\alpha}|\big))<\8.
\end{array}
\ee
Hence, $\eta x\in \M_{\Phi,\8}^{\alpha,\omega}$.  If $|\eta|>1$, since $\Phi\in\bigtriangleup_{2}$, there exists a constant $k=k(|\eta|)>0$ such that $\Phi(|\eta|t)\le k\Phi(t)$ for all
$t>0$. Similar to the above, we obtain that $\eta x\in \M_{\Phi,\8}^{\alpha,\omega}$.

Now let  $x,y\in \M_{\Phi,\8}^{\alpha,\omega}$. Using  Lemma 2.5 in \cite{FK}, convexity of $\Phi$ and $\Phi\in\bigtriangleup_{2}$, we get
\be
\begin{array}{rl}
&\sup_{t>0}t \mu_t(\Phi\big(|\Phi^{-1}(D_{\omega})^\alpha x+y\Phi^{-1}(D_{\omega})^{1-\alpha}|\big))\\
&\qquad=\sup_{t>0}t \Phi\big( \mu_t(\Phi^{-1}(D_{\omega})^\alpha x+y\Phi^{-1}(D_{\omega})^{1-\alpha})\big)\\
&\qquad\le\sup_{t>0}t \Phi\big( \mu_{t/2}(\Phi^{-1}(D_{\omega})^\alpha x\Phi^{-1}(D_{\omega})^{1-\alpha})\\
&\qquad\qquad\qquad\qquad\qquad\qquad+\mu_{t/2}(\Phi^{-1}(D_{\omega})^\alpha y\Phi^{-1}(D_{\omega})^{1-\alpha})\big)\\
&\qquad\le c\sup_{t>0}t/2\Phi\big(\mu_{t/2}(\Phi^{-1}(D_{\omega})^\alpha x\Phi^{-1}(D_{\omega})^{1-\alpha})\big)\\
&\qquad\qquad\quad+ c\sup_{t>0}t/2\Phi\big(\mu_{t/2}(\Phi^{-1}(D_{\omega})^\alpha y\Phi^{-1}(D_{\omega})^{1-\alpha})\big)<\8,
\end{array}
\ee
and so $x+y\in \M_{\Phi,\8}^{\alpha,\omega}$.
\end{proof}
Similar to \cite[Proposition 3.2]{BCLJ}, we have the following result.
\begin{proposition}\label{prop:ineq}
\begin{enumerate}[\rm (i)]

\item If $\|x\|_{\Phi,\8,\alpha,\omega} > 0$ then
\be
\sup_{t>0} t \Phi \big ( \mu_t (\Phi^{-1}(D_{\omega})^\alpha x\Phi^{-1}(D_{\omega})^{1-\alpha})/\|x\|_{\Phi,\8,\alpha,\omega} \big ) \leq 1.
\ee
\item $\|x\|_{\Phi,\8,\alpha,\omega}$ is a quasi-norm on the linear space $\M_{\Phi,\8}^{\alpha,\omega}$ and
\beq\label{eq:QuasiNormInequa}
\| x + y \|_{\Phi,\8,\alpha,\omega} \le 2 ( \|x\|_{\Phi,\8,\alpha,\omega} + \|y\|_{\Phi,\8,\alpha,\omega}),\; \forall x, y \in \M_{\Phi,\8}^{\alpha,\omega}.
\eeq
\item If $\|x\|_{\Phi,\8,\alpha,\omega}\leq 1,$ then
\be
\sup_{t>0} t \Phi(\mu_t (\Phi^{-1}(D_{\omega})^\alpha x\Phi^{-1}(D_{\omega})^{1-\alpha})) \leq \|x\|_{\Phi,\8,\alpha,\omega}.
\ee

\item $\| x\|_{\Phi,\8,\alpha,\omega} \le \| x \|_{\Phi,\alpha,\omega}$ for any $x \in \M_{\Phi}^{\alpha,\omega}$,
where
$$
\M_{\Phi}^{\alpha,\omega}= \{x\in\M:\;\Phi^{-1}(D_{\omega})^\alpha x\Phi^{-1}(D_{\omega})^{1-\alpha}\in
L_{\Phi} (\M) \}
$$
and
$\| x \|_{\Phi,\alpha,\omega}=\|\Phi^{-1}(D_{\omega})^\alpha x\Phi^{-1}(D_{\omega})^{1-\alpha}\|_{\Phi}$. Consequently, $\M_{\Phi}^{\alpha,\omega}\subset \M_{\Phi,\8}^{\alpha,\omega}$.

\end{enumerate}
\end{proposition}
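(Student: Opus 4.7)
The plan is to reduce every assertion to the corresponding property of the quasi-norm on $L_{\Phi,\infty}(\M)$ established in Section 2, via the substitution $y_x := \Phi^{-1}(D_\omega)^\alpha\,x\,\Phi^{-1}(D_\omega)^{1-\alpha}$. By the definitions in Section 3, one has $x \in \M_{\Phi,\infty}^{\alpha,\omega}$ precisely when $y_x \in L_{\Phi,\infty}(\M)$, with $\|x\|_{\Phi,\infty,\alpha,\omega} = \|y_x\|_{\Phi,\infty}$. The four assertions then become the analogues of Proposition 3.2 of \cite{BCLJ} transported back along this substitution, and short arguments along the lines below suffice.

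For (i), pick $c_n \searrow \|x\|_{\Phi,\infty,\alpha,\omega} > 0$ with $\sup_{t>0} t\,\Phi(\mu_t(y_x)/c_n) \le 1$. Since $\Phi$ is continuous and nondecreasing, $\Phi(\mu_t(y_x)/c_n) \nearrow \Phi(\mu_t(y_x)/\|x\|_{\Phi,\infty,\alpha,\omega})$ pointwise in $t$, and the claimed supremum bound follows by monotone convergence. For (ii), definiteness, nonnegativity and absolute homogeneity come from $\mu_t(\lambda y_x) = |\lambda|\mu_t(y_x)$ after rescaling $c$. For the quasi-triangle inequality, given $x_1, x_2$ with $a := \|x_1\|_{\Phi,\infty,\alpha,\omega}$ and $b := \|x_2\|_{\Phi,\infty,\alpha,\omega}$ (both positive, the degenerate cases being trivial), Lemma 2.5 of \cite{FK} gives $\mu_t(y_{x_1}+y_{x_2}) \le \mu_{t/2}(y_{x_1})+\mu_{t/2}(y_{x_2})$, and the decomposition
\[
\frac{\mu_{t/2}(y_{x_1})+\mu_{t/2}(y_{x_2})}{2(a+b)} = \frac{a}{a+b}\cdot\frac{\mu_{t/2}(y_{x_1})}{2a} + \frac{b}{a+b}\cdot\frac{\mu_{t/2}(y_{x_2})}{2b},
\]
combined with convexity of $\Phi$ and the elementary bound $\Phi(z/2) \le \Phi(z)/2$ (which holds because $\Phi$ is convex with $\Phi(0) = 0$), reduces the claim to an application of (i) for $y_{x_1}$ and $y_{x_2}$, yielding $t\,\Phi(\mu_t(y_{x_1+x_2})/(2(a+b))) \le 1$ and hence \eqref{eq:QuasiNormInequa}.

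For (iii), set $c := \|x\|_{\Phi,\infty,\alpha,\omega}$; the case $c = 0$ is trivial, so assume $0 < c \le 1$. Convexity of $\Phi$ at $0$ gives $\Phi(\mu_t(y_x)) \le c\,\Phi(\mu_t(y_x)/c)$, and multiplying by $t$ and applying (i) yields $t\,\Phi(\mu_t(y_x)) \le c$. For (iv), let $c := \|x\|_{\Phi,\alpha,\omega} = \|y_x\|_\Phi$, so that
\[
\int_0^{\tau(1)} \Phi(\mu_t(y_x)/c)\,dt = \tau(\Phi(|y_x|/c)) \le 1.
\]
Since $t \mapsto \Phi(\mu_t(y_x)/c)$ is nonincreasing, one has $s\,\Phi(\mu_s(y_x)/c) \le \int_0^s \Phi(\mu_t(y_x)/c)\,dt \le 1$ for every $s > 0$, which is precisely $\|x\|_{\Phi,\infty,\alpha,\omega} \le c$. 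The only mildly technical issue throughout is ensuring that $y_x$ is $\tau$-measurable so that the singular-number calculus of \cite{FK} applies to it; this is built into the definition of $\M_{\Phi,\infty}^{\alpha,\omega}$ and was already exploited in the preceding lemma on linearity, so no additional work is required.
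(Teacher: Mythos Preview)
Your proposal is correct and follows exactly the approach the paper indicates: the paper gives no argument at all, simply writing ``Similar to \cite[Proposition 3.2]{BCLJ}, we have the following result,'' and your reduction via $y_x=\Phi^{-1}(D_\omega)^\alpha x\,\Phi^{-1}(D_\omega)^{1-\alpha}$ together with the identity $\|x\|_{\Phi,\infty,\alpha,\omega}=\|y_x\|_{\Phi,\infty}$ is precisely that transfer. Your explicit verifications of (i)--(iv) are the standard ones for the weak Orlicz quasi-norm and are all sound; in effect you have supplied the details the paper omits.
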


\begin{definition} Let $\omega$  be a faithful normal semifinite weight on $\M$ and $\alpha\in[0,1]$. We call the completion of $(\M_{\Phi,\8}^{\alpha,\omega}, \|\cdot\|_{\Phi,\8,\alpha,\omega})$ is  the weak noncommutative Orlicz space associted with $\Phi,\M$ and $\omega$, denote by $L_{\Phi,\8}^{\alpha,\omega}(\M,\tau)$.
\end{definition}

\begin{lemma}\label{lem:density}
Let $D$ be a positive nonsingular operator  in $L^1(\M)$. If $\alpha\in[0,1]$, then $\Phi^{-1}(D)^{\alpha}\M\Phi^{-1}(D)^{1-\alpha}$ is dense in $L_{\Phi,\8}^0(\M)$.
\end{lemma}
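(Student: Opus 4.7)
The plan is to establish density via a common cut-off family $e_n=\chi_{[1/n,n]}(D)$ (the spectral projection of $D$ on $[1/n,n]$) that lies simultaneously in $S(\M)$ and in $\Phi^{-1}(D)^{\alpha}\M\Phi^{-1}(D)^{1-\alpha}$. Since $D$ is positive, nonsingular and $D\in L^1(\M)$, we have $e_n\to 1$ strongly, and $\tau(e_n)\le n\tau(D)<\8$ (from $e_n\le nD$). Because $e_n$ commutes with $\Phi^{-1}(D)$ by functional calculus and $\Phi^{-1}(D)^{-\beta}e_n$ is bounded (by $\Phi^{-1}(1/n)^{-\beta}$) for every $\beta\ge 0$, for each $y\in\M$,
\beqn
e_nye_n=\Phi^{-1}(D)^{\alpha}\,\bigl[\Phi^{-1}(D)^{-\alpha}e_nye_n\Phi^{-1}(D)^{-(1-\alpha)}\bigr]\,\Phi^{-1}(D)^{1-\alpha},
\eeqn
which belongs to $\Phi^{-1}(D)^{\alpha}\M\Phi^{-1}(D)^{1-\alpha}$, and also to $S(\M)$ because its left and right supports are dominated by the finite-trace projection $e_n$. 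Density in both directions therefore reduces to proving $\|y-e_nye_n\|_{\Phi,\8}\to 0$ for (i) $y\in S(\M)$, giving $L_{\Phi,\8}^0(\M)\subset\overline{\Phi^{-1}(D)^{\alpha}\M\Phi^{-1}(D)^{1-\alpha}}$, and (ii) $y=\Phi^{-1}(D)^{\alpha}z\Phi^{-1}(D)^{1-\alpha}$ with $z\in\M$, giving $\Phi^{-1}(D)^{\alpha}\M\Phi^{-1}(D)^{1-\alpha}\subset L_{\Phi,\8}^0(\M)$.

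For case (i), write $y=pyp$ with $\tau(p)=T<\8$ and decompose $y-e_nye_n=(1-e_n)y+e_ny(1-e_n)$. Normality of $\tau$ together with $yy^*\in L^1(\M)$ yields $\|(1-e_n)y\|_2^2=\tau(yy^*(1-e_n))\to 0$; combined with $\|(1-e_n)y\|_\8\le\|y\|_\8$, interpolation gives $\|(1-e_n)y\|_q\le\|y\|_\8^{1-2/q}\|(1-e_n)y\|_2^{2/q}\to 0$ for all $q\in[2,\8)$. To translate this into the weak Orlicz quasinorm, I use the equivalent norm $\|\cdot\|_{(\Phi)',\8}=\sup_{t>0}\varphi(t)^{-1}\int_0^t\mu_s\,ds$ from Corollary~\ref{cor:equivalent norm}. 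Because $(1-e_n)y$ has right support $\le p$, $\mu_s((1-e_n)y)=0$ for $s\ge T$, so the contribution from $t\ge T$ equals $\|(1-e_n)y\|_1/\varphi(T)\le T^{1/2}\|(1-e_n)y\|_2/\varphi(T)\to 0$. For $t\in(0,T]$, H\"older gives $\int_0^t\mu_s\le t^{1-1/q}\|(1-e_n)y\|_q$, and the index bound $q_\varphi\le 1/a_\Phi<1$ from \eqref{eq:weak index-strong index} supplies $\varphi(t)\ge c_\epsilon t^{q_\varphi+\epsilon}$ for small $t$, so picking $q>1/(1-q_\varphi-\epsilon)$ makes $\sup_{(0,T]}t^{1-1/q}/\varphi(t)<\8$. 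The term $e_ny(1-e_n)$ is handled symmetrically.

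For case (ii), commutativity yields
\beqn
y-e_nye_n=\Phi^{-1}(D)^{\alpha}\,\bigl[(1-e_n)z+e_nz(1-e_n)\bigr]\,\Phi^{-1}(D)^{1-\alpha},
\eeqn
and I split $1-e_n=f_n^-+f_n^+$ with $f_n^-=\chi_{[0,1/n)}(D)$, $f_n^+=\chi_{(n,\8)}(D)$. The $f_n^-$ contribution is handled via $\|\Phi^{-1}(D)^{\alpha}f_n^-\|_\8\le\Phi^{-1}(1/n)^{\alpha}\to 0$ combined with the three-factor inequality $\mu_t(abc)\le\mu_{t/3}(a)\mu_{t/3}(b)\mu_{t/3}(c)$ and the fact that $\Phi^{-1}(D)\in L_{\Phi,\8}(\M)$ (since $\sup_s s\Phi(\mu_s(\Phi^{-1}(D)))=\sup_s s\mu_s(D)\le\|D\|_1$). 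For the $f_n^+$ contribution, $\tau(f_n^+)\le\tau(D)/n\to 0$; the identity $\Phi(\Phi^{-1}(D)f_n^+)=Df_n^+$ together with $\Phi\in\bigtriangleup_{2}$ yield $\|\Phi^{-1}(D)f_n^+\|_\Phi\to 0$, and another application of the three-factor inequality closes the estimate.

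The principal obstacle is translating the soft convergences ($L^2$-smallness of $(1-e_n)y$, trace-smallness of $f_n^+$, operator-norm smallness of $\Phi^{-1}(D)^{\alpha}f_n^-$) into the weak Orlicz quasinorm. This is where the boundedness of the indices $1<a_\Phi\le b_\Phi<\8$ is essential: it provides the power-type bounds on $\varphi$ used both in the H\"older step of case (i) and in the Orlicz estimate of case (ii).
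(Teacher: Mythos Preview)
Your overall architecture coincides with the paper's: same cut-off family $e_n=\chi_{[1/n,n]}(D)$, same algebraic observation that $e_n\M e_n\subset\Phi^{-1}(D)^{\alpha}\M\Phi^{-1}(D)^{1-\alpha}$, and the same two approximation statements. In case~(i) your route via $L^2\to L^q$ interpolation and the Marcinkiewicz norm is correct, though the paper takes a shorter path: it simply shows $\|xe_n-x\|_{\Phi}\to 0$ by dominated convergence in $L_\Phi$ (using $\mu_t(xe_n-x)\le\mu_t(x)$ and $\mu_t(xe_n-x)\to 0$), and then invokes $\|\cdot\|_{\Phi,\8}\le\|\cdot\|_\Phi$. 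No Boyd-index bookkeeping is needed.

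There is a genuine gap in your case~(ii), specifically in the $f_n^-$ contribution when $\alpha\in(0,1)$. You propose to combine $\|\Phi^{-1}(D)^{\alpha}f_n^-\|_\8\le\Phi^{-1}(1/n)^{\alpha}\to 0$ with the three-factor bound and $\Phi^{-1}(D)\in L_{\Phi,\8}(\M)$. But carrying this through yields
\[
\frac{\mu_t\bigl(\Phi^{-1}(D)^{\alpha}f_n^- z\,\Phi^{-1}(D)^{1-\alpha}\bigr)}{\Phi^{-1}(1/t)}
\le \Phi^{-1}(1/n)^{\alpha}\,\|z\|\,\frac{\mu_{t/3}(\Phi^{-1}(D))^{1-\alpha}}{\Phi^{-1}(1/t)}
\le C\,\Phi^{-1}(1/n)^{\alpha}\,\Phi^{-1}(1/t)^{-\alpha},
\]
and the right-hand side is unbounded in $t$ (it blows up as $t\to\8$), so no control on $\|\cdot\|_{\Phi,\8}$ follows. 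The operator-norm smallness of the left factor cannot compensate for the fact that $\Phi^{-1}(D)^{1-\alpha}\notin L_{\Phi,\8}(\M)$ when $\alpha>0$. The split $1-e_n=f_n^-+f_n^+$ is in fact unnecessary: what is needed is a H\"older-type inequality that places $\Phi^{-1}(D)^{\alpha}(1-e_n)$ in the auxiliary weak Orlicz space $L_{\Phi^{(1/\alpha)},\8}(\M)$ (where $\Phi^{(p)}(t)=\Phi(t^p)$) and $\Phi^{-1}(D)^{1-\alpha}$ in $L_{\Phi^{(1/(1-\alpha))},\8}(\M)$. This is exactly what the paper does: from
\[
\tau\bigl(\Phi^{(1/\alpha)}\bigl((1-e_n)\Phi^{-1}(D)^{\alpha}\bigr)\bigr)=\tau\bigl(D(1-e_n)\bigr)\to 0
\]
one gets $\|(1-e_n)\Phi^{-1}(D)^{\alpha}\|_{\Phi^{(1/\alpha)},\8}\to 0$, and then the factorisation $\Phi^{-1}(1/t)=\Phi^{-1}(1/t)^{\alpha}\cdot\Phi^{-1}(1/t)^{1-\alpha}$ of the denominator gives
\[
\|\,\cdot\,\|_{\Phi,\8}\le 2\|z\|\,\|(1-e_n)\Phi^{-1}(D)^{\alpha}\|_{\Phi^{(1/\alpha)},\8}\,\|\Phi^{-1}(D)^{1-\alpha}\|_{\Phi^{(1/(1-\alpha))},\8}\to 0.
\]
Your $f_n^+$ sketch suffers the same defect: you obtain $\|\Phi^{-1}(D)f_n^+\|_\Phi\to 0$, but ``another application of the three-factor inequality'' does not by itself transfer this to the product with the unbounded factor $\Phi^{-1}(D)^{1-\alpha}$ without the $\Phi^{(1/\alpha)}$ mechanism.
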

\begin{proof} Set $e_n=e_{(\frac{1}{n},n]}(D)$, for any $n\in\mathbb{N}$. Then $e_n$ increases strongly to $1$ and $\tau(e_n)<\8$, for any $n\in\mathbb{N}$. Let $x\in S(\M)$. Then there is a projection $e$ in $\M$ such that $\tau(e)<\8$ and $ex=xe=x$. Hence, $x\in L^1(\M)$. By \cite[Lemma 2.1]{J}, we get
 $\lim_{n\rightarrow\8}\|xe_{n}-x\|_{1}=0$. It follows that  $xe_{n}- x\rightarrow0$ in measure as $n\rightarrow\8$.
Using \cite[Lemma 3.1]{FK}, we get for any $t>0$, $\mu_t(xe_{n}- x)\rightarrow0$  as $n\rightarrow\8$. On the other hand, by Lemma 2.5 in \cite{FK},  $\mu_t(xe_{n}- x)\le\mu_t(x)$ for all $t>0$. Applying Lebesgue dominated convergence theorem, we get
\be
\lim_{n\rightarrow\8}\tau(\Phi(|xe_n-x|))=\lim_{n\rightarrow\8}\int_{0}^{\tau(1)}\Phi(\mu_t(xe_n-x)dt=0.
\ee
Therefore, $\lim_{n\rightarrow\8}\|xe_n-x\|_{\Phi}=0$. Similarly,
$\lim_{n\rightarrow\8}\|e_n x-x\|_{\Phi}=0$. Using (iv) of Proposition \ref{prop:ineq}, we obtain that $\lim_{n\rightarrow\8}\|xe_n-x\|_{\Phi,\8}=0$ and
$\lim_{n\rightarrow\8}\|e_n x-x\|_{\Phi,\8}=0$, and so
$$
\lim_{n\rightarrow\8}\|e_nxe_n-x\|_{\Phi,\8}\leq2[\lim_{n\rightarrow\8}\|xe_n-x\|_{\Phi,\8}+\lim_{n\rightarrow\8}\|e_{n}x-x\|_{\Phi,\8}]=0,
$$
i.e., the closure of $\cup_{n=1}^{\8}e_n\M e_n$ in $L_{\Phi,\8}(\M)$ contains $S(\M)$. Thus, $\cup_{n=1}^{\8}e_n\M e_n$ is dense in $L_{\Phi,\8}^0(\M)$.

Next, we prove that $\Phi^{-1}(D)^{\alpha}\M\Phi^{-1}(D)^{1-\alpha}\subset L_{\Phi,\8}^0(\M)$. Set $\Phi^{(p)}(t) =\Phi(t^p)$, for $1<p<\8$. Let $y\in\M$. If $\alpha\in(0,1)$, then
\be
\lim_{n\rightarrow\8}\tau(\Phi^{(\frac{1}{\alpha})}(|\Phi^{-1}(D)^{\alpha}-e_n\Phi^{-1}(D)^{\alpha}|)=\lim_{n\rightarrow\8}\tau(D-De_n)=0.
\ee
It follows that $\lim_{n\rightarrow\8}\|\Phi^{-1}(D)^{\alpha}-e_n\Phi^{-1}(D)^{\alpha}\|_{\Phi^{(\frac{1}{\alpha})},\8}=0$. Since $\Phi^{-1}(\frac{s}{2})\ge\frac{1}{2}\Phi^{-1}(s)$ for all $s>0$, by Lemma 2.5 in \cite{FK}, we get
\be
\begin{array}{rl}
&\|\Phi^{-1}(D)^{\alpha}y\Phi^{-1}(D)^{1-\alpha}-e_n\Phi^{-1}(D)^{\alpha}y\Phi^{-1}(D)^{1-\alpha}\|_{\Phi,\8}\\
&\qquad=\sup_{t>0}\frac{1}{\Phi^{-1}(\frac{1}{2t})} \mu_{2t} (\Phi^{-1}(D)^{\alpha}y\Phi^{-1}(D)^{1-\alpha}-e_n\Phi^{-1}(D)^{\alpha}y\Phi^{-1}(D)^{1-\alpha})\\
&\qquad\le2\|y\|\sup_{t>0}\frac{1}{\Phi^{-1}(\frac{1}{t})} \mu_{t} ((1-e_n)\Phi^{-1}(D)^{\alpha})\mu_{t} (\Phi^{-1}(D)^{1-\alpha})\\
&\qquad\le2\|y\|\|(1-e_n)\Phi^{-1}(D)^{\alpha}\|_{\Phi^{(\frac{1}{\alpha})},\8}\|\Phi^{-1}(D)^{1-\alpha}\|_{\Phi^{(\frac{1}{1-\alpha})},\8}.
\end{array}
\ee
Hence, $\lim_{n\rightarrow\8}\|\Phi^{-1}(D)^{\alpha}y\Phi^{-1}(D)^{1-\alpha}-e_n\Phi^{-1}(D)^{\alpha}y\Phi^{-1}(D)^{1-\alpha}\|_{\Phi,\8}=0$. Similar to the above, $\lim_{n\rightarrow\8}\|\Phi^{-1}(D)^{\alpha}y\Phi^{-1}(D)^{1-\alpha}-e_n\Phi^{-1}(D)^{\alpha}y\Phi^{-1}(D)^{1-\alpha}e_n\|_{\Phi,\8}=0$. On the other hand,
$e_n\Phi^{-1}(D)^{\alpha},\;\Phi^{-1}(D)^{1-\alpha}e_n\in\M$, and so for all $n\in\mathbb{N}$,
\be
e_n\Phi^{-1}(D)^{\alpha}y\Phi^{-1}(D)^{1-\alpha}e_n\in e_n\M e_n.
\ee
 Therefore, $\Phi^{-1}(D)^{\alpha}y\Phi^{-1}(D)^{1-\alpha}\in L_{\Phi,\8}^0(\M)$. In the case $\alpha=0,1$, this result also holds. Thus $\Phi^{-1}(D)^{\alpha}\M\Phi^{-1}(D)^{1-\alpha}\subset L_{\Phi,\8}^0(\M)$.

Finally, we prove that $\Phi^{-1}(D)^{\alpha}\M\Phi^{-1}(D)^{1-\alpha}$ is dense in $L_{\Phi,\8}^0(\M)$. For this it is sufficient to prove that $\cup_{n=1}^{\8}e_n\M e_n\subset\Phi^{-1}(D)^{\alpha}\M\Phi^{-1}(D)^{1-\alpha}$. Since for any $ n\in\mathbb{N}$, $e_n\Phi^{-1}(D)^{-\alpha},\;\Phi^{-1}(D)^{\alpha-1}e_n\in\M$, we have that
\be
\begin{array}{rl}
e_n\M e_n&=\Phi^{-1}(D)^{\alpha}\Phi^{-1}(D)^{-\alpha}e_n\M e_n\Phi^{-1}(D)^{\alpha-1}\Phi^{-1}(D)^{1-\alpha}\\
&=\Phi^{-1}(D)^{\alpha}e_n\Phi^{-1}(D)^{-\alpha}\M \Phi^{-1}(D)^{\alpha-1}e_n\Phi^{-1}(D)^{1-\alpha}\\
&\subset\Phi^{-1}(D)^{\alpha}\M\Phi^{-1}(D)^{1-\alpha}.
\end{array}
\ee
It follows that $\cup_{n=1}^{\8}e_n\M e_n\subset\Phi^{-1}(D)^{\alpha}\M\Phi^{-1}(D)^{1-\alpha}$.
\end{proof}

\begin{theorem}\label{thm:eqivalent-weakspace} Let $\omega$  be a faithful normal semifinite weight on $\M$ such that its the Radon-Nikodym derivative $D_{\omega}$ with respect to
satisfy $D_{\omega}\in L^1(\M)$.
 If $\alpha\in[0,1]$, then $L_{\Phi,\8}^{\alpha,\omega}(\M,\tau)$ and  $L_{\Phi,\8}^0(\M)$  are isometrically isomorphic.
\end{theorem}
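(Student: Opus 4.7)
The natural candidate for the isomorphism is the two-sided multiplication map
\[
T\colon \M_{\Phi,\8}^{\alpha,\omega} \longrightarrow L_{\Phi,\8}^{0}(\M),\qquad
Tx \,=\, \Phi^{-1}(D_{\omega})^{\alpha}\,x\,\Phi^{-1}(D_{\omega})^{1-\alpha},
\]
since the two norms are in fact \emph{defined} to match: $\|x\|_{\Phi,\8,\alpha,\omega}=\|Tx\|_{\Phi,\8}$. My plan is to show that $T$ is well-defined on all of $\M$ with dense range in $L_{\Phi,\8}^{0}(\M)$, and then to extend by continuity.

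Since $D_{\omega}\in L^{1}(\M)$, both $\Phi^{-1}(D_{\omega})^{\alpha}$ and $\Phi^{-1}(D_{\omega})^{1-\alpha}$ are positive measurable operators. The H\"{o}lder-type estimate carried out in the proof of Lemma \ref{lem:density} (taking $D=D_{\omega}$) gives
\[
\|Tx\|_{\Phi,\8}\le 2\|x\|\,\|\Phi^{-1}(D_{\omega})^{\alpha}\|_{\Phi^{(1/\alpha)},\8}\,\|\Phi^{-1}(D_{\omega})^{1-\alpha}\|_{\Phi^{(1/(1-\alpha))},\8}
\]
for all $x\in\M$ (with obvious modifications when $\alpha\in\{0,1\}$), and the two weak-Orlicz factors are finite precisely because $D_{\omega}\in L^{1}(\M)$. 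Consequently $\M_{\Phi,\8}^{\alpha,\omega}=\M$, and Lemma \ref{lem:density} gives the stronger conclusion $T(\M)\subset L_{\Phi,\8}^{0}(\M)$ together with the density of $T(\M)=\Phi^{-1}(D_{\omega})^{\alpha}\M\Phi^{-1}(D_{\omega})^{1-\alpha}$ in $L_{\Phi,\8}^{0}(\M)$.

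By construction $T$ is a linear isometry from the quasi-normed space $(\M,\|\cdot\|_{\Phi,\8,\alpha,\omega})$ into the quasi-Banach space $L_{\Phi,\8}^{0}(\M)$. It therefore extends uniquely to an isometry $\widetilde{T}\colon L_{\Phi,\8}^{\alpha,\omega}(\M,\tau)\to L_{\Phi,\8}^{0}(\M)$ between the completions. The range of $\widetilde T$ is closed (isometric image of a complete space) and contains the dense subset $T(\M)$, so $\widetilde T$ is surjective, and hence the desired isometric isomorphism.

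The main obstacle is the very first step: making sense of the two-sided product $\Phi^{-1}(D_{\omega})^{\alpha}x\Phi^{-1}(D_{\omega})^{1-\alpha}$ as an element of the \emph{small} space $L_{\Phi,\8}^{0}(\M)$ and controlling its $\|\cdot\|_{\Phi,\8}$-norm uniformly in $x\in\M$. This is exactly what the assumption $D_{\omega}\in L^{1}(\M)$ buys us through the submultiplicative generalized singular-value inequality $\mu_{t}(ab)\le\mu_{t/2}(a)\mu_{t/2}(b)$ used in Lemma \ref{lem:density}; once that estimate is in place, the remaining completion and surjectivity arguments are routine.
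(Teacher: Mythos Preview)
Your proof is correct and follows essentially the same route as the paper: define the two-sided multiplication map $T$, note it is an isometry by the very definition of $\|\cdot\|_{\Phi,\8,\alpha,\omega}$, invoke Lemma~\ref{lem:density} for density of the range in $L_{\Phi,\8}^{0}(\M)$, and extend to the completions. Your explicit observation that $\M_{\Phi,\8}^{\alpha,\omega}=\M$ (via the H\"older-type bound from the proof of Lemma~\ref{lem:density}) is a helpful clarification that the paper leaves implicit when it asserts $T$ maps $\M_{\Phi,\8}^{\alpha,\omega}$ onto $\Phi^{-1}(D_\omega)^{\alpha}\M\Phi^{-1}(D_\omega)^{1-\alpha}$, but it does not constitute a different strategy.
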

\begin{proof} We define $T:\M_{\Phi,\8}^{\alpha,\omega}\rightarrow L_{\Phi,\8}^{\alpha,\omega}(\M,\tau)$ by
\be
T(x)=\Phi^{-1}(D)^{\alpha}x\Phi^{-1}(D)^{1-\alpha},\qquad x\in\M_{\Phi,\8}^{\alpha,\omega}.
\ee
Then $T$ is a linear isometry from $\M_{\Phi,\8}^{\alpha,\omega}$ to $\Phi^{-1}(D)^{\alpha}\M\Phi^{-1}(D)^{1-\alpha}$. By the definition of $L_{\Phi,\8}^{\alpha,\omega}(\M,\tau)$ and Lemma \ref{lem:density}, we know that $\M_{\Phi,\8}^{\alpha,\omega}$ is dense in $L_{\Phi,\8}^{\alpha,\omega}(\M,\tau)$ and  $\Phi^{-1}(D)^{\alpha}\M\Phi^{-1}(D)^{1-\alpha}$ is dense in  $L_{\Phi,\8}^0(\M)$. Hence, we can extend to an isometric isomorphism between $L_{\Phi,\8}^{\alpha,\omega}(\M,\tau)$ and  $L_{\Phi,\8}^0(\M)$.
\end{proof}

\section{Noncommutative  weak Orlicz-Hardy spaces }

We will assume that $\mathcal{D}$ is a von Neumann
 subalgebra of $\mathcal{M}$ such that the restriction of $\tau$ to $\mathcal{D}$ is still semifinite. Let $\mathcal{E}$ be the (unique) normal faithful
 conditional expectation of $\mathcal{M}$ with respect to $\mathcal{D}$ which
 leaves $\tau$ invariant.

\begin{definition} A w*-closed subalgebra $\A$ of $\M$ is
called a subdiagonal subalgebra of $\M$ with respect to $\E$(or
$\D$) if \begin{enumerate}[\rm (i)]

\item $\A+ J(A)$ is w*-dense in  $\M$, where $J(A)=\{x^*:\;x\in\A\}$,

\item $\E(xy)=\E(x)\E(y),\; \forall\;x,y\in
\A$,

\item $\A\cap J(A)=\D$.
\end{enumerate}
$\D$ is then called the diagonal of $\A$.
\end{definition}

In this section  $\M$ always denotes a semifinite von Neumann algebra  with a normal semi-finite faithful trace $\tau$ satisfying $\tau(1)=\gamma$ and $\A$ denotes
 a subdiagonal subalgebra of $\M$ with respect to $\E$(or
$\D$). We keep all notations introduced in the previous  section.

Let
\be
H_{\Phi,\8}(\A)=\{x\in L_{\Phi,\8}(\M):\quad \tau(xa)=0,\quad \forall a\in\A_0\}.
\ee
Then $H_{\Phi,\8}(\A)$ is called noncommutative weak Orlicz-Hardy space associated with $\mathcal{A}$. Similarly, we define
$H_{1,\Psi}(\A)$ by
\be
H_{1,\Psi}(\A)=\{x\in L_{1,\Psi}(\M):\quad \tau(xa)=0,\quad \forall a\in\A_0\}.
\ee

Let $\M$ be finite ($\tau(1)=\alpha<\8$). By Propositions 4.3 in \cite{Be}, we know that $H_{1,\Psi}(\A)$ is the closure of $\A$ in
$L_{1,\Psi}(\M)$.

\begin{proposition}\label{pro:weak-hardy space}
 Let $1< a_{\Phi} \le b_{\Phi}<\8$. Then
\be
L_{1,\Psi}(\M)=H_{1,\Psi}(\A)\oplus J(H_{1,\Psi}(\A)_0),
\ee
where $H_{1,\Psi}(\A)_0=\{x\in H_{1,\Psi}(\A):\;\E(x)=0\}$.
\end{proposition}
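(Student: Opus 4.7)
The plan is to prove the result in two steps: first verify $H_{1,\Psi}(\A)\cap J(H_{1,\Psi}(\A)_0)=\{0\}$ so that the sum is direct, then show every $x\in L_{1,\Psi}(\M)$ admits a decomposition $x=y+z$ with $y\in H_{1,\Psi}(\A)$ and $z\in J(H_{1,\Psi}(\A)_0)$.

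For the trivial intersection, let $x$ lie in $H_{1,\Psi}(\A)\cap J(H_{1,\Psi}(\A)_0)$. From $x\in H_{1,\Psi}(\A)$ one gets $\tau(xa)=0$ for every $a\in\A_0$. From $x^*\in H_{1,\Psi}(\A)_0$ one gets both $\E(x)=0$ and $\tau(x^*a)=0$ for every $a\in\A_0$; taking adjoints and using the trace property rewrites the latter as $\tau(xb)=0$ for every $b\in J(\A_0)$. Since also $\tau(xd)=\tau(\E(x)d)=0$ for $d\in\D$, we conclude that $\tau(xy)=0$ on the algebraic sum $\A_0+\D+J(\A_0)=\A+J(\A)$, which is $w^*$-dense in $\M$. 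In the finite-trace setting $L_{1,\Psi}(\M)\hookrightarrow L_1(\M)$ continuously (because $\Psi^{-1}(1/t)$ is bounded below on $(0,\tau(1)]$); hence $y\mapsto\tau(xy)$ is $w^*$-continuous on $\M$, the vanishing propagates from $\A+J(\A)$ to all of $\M$, and faithfulness of $\tau$ forces $x=0$.

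For the existence of the decomposition, the idea is to transport the classical Riesz-type decomposition $L_p(\M)=H_p(\A)\oplus J(H_p(\A)_0)$, valid for every $1<p<\8$ in the finite subdiagonal setting, to the weak-Orlicz space by interpolation. The hypothesis $1<a_\Phi\le b_\Phi<\8$ together with \eqref{eq:weak index-strong index} places the Boyd indices of $L_{1,\Psi}(\M)=\Lambda_\varphi(\M)$ strictly inside $(0,1)$; by \cite[Theorems III.3.8 and III.3.10]{BS} applied as in Proposition \ref{pro:equivalent norm}, $L_{1,\Psi}(\M)$ is an interpolation space between $L_{p_0}(\M)$ and $L_{p_1}(\M)$ for some $1<p_0<p_1<\8$. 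Interpolating the bounded Riesz projections $P_{p_i}:L_{p_i}(\M)\to H_{p_i}(\A)$ yields a bounded projection $P$ on $L_{1,\Psi}(\M)$. Given $x\in L_{1,\Psi}(\M)$, one approximates $x$ in norm by $x_n\in\M\subset L_{p_0}(\M)\cap L_{p_1}(\M)$ using order continuity of $\|\cdot\|_{\Lambda_\varphi}$, writes $x_n=P(x_n)+(x_n-P(x_n))$ with $P(x_n)\in H_{p_i}(\A)\cap L_{1,\Psi}(\M)\subset H_{1,\Psi}(\A)$ and $x_n-P(x_n)\in J(H_{p_i}(\A)_0)\cap L_{1,\Psi}(\M)\subset J(H_{1,\Psi}(\A)_0)$, and then passes to the limit: both $H_{1,\Psi}(\A)$ (an intersection of kernels of continuous functionals) and $J(H_{1,\Psi}(\A)_0)$ (closed since $\E$ is bounded on $L_{1,\Psi}(\M)$ and $J$ is an isometric involution) are norm-closed, so one obtains $x=P(x)+(x-P(x))$ as required. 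The main obstacle is precisely this second step, and within it the boundedness of the Riesz projection on $L_{1,\Psi}(\M)$ with the correct range and kernel; the Boyd-index hypothesis is exactly what ensures that $L_{1,\Psi}(\M)$ lies strictly between two reflexive $L_p$-spaces, so that the interpolation machinery applies and the subdiagonal Hardy-space structure is preserved under the interpolated projection.
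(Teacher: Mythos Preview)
Your approach is essentially the same as the paper's: both arguments compute the Boyd indices of $L_{1,\Psi}(\M)=\Lambda_\varphi(\M)$ via \eqref{eq:weak index-strong index}, conclude they lie strictly between $1$ and $\infty$, and then interpolate the bounded Riesz projections $P:L_{p_i}(\M)\to H_{p_i}(\A)$ to obtain a bounded projection on $L_{1,\Psi}(\M)$. The paper is terser, invoking \cite[Theorem 5]{BM} directly in the finite case and \cite[Theorem 3.4]{DDP1} for the interpolation step in the semifinite case, rather than spelling out the approximation and closedness arguments you give.

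One remark: your separate verification that $H_{1,\Psi}(\A)\cap J(H_{1,\Psi}(\A)_0)=\{0\}$ explicitly uses the embedding $L_{1,\Psi}(\M)\hookrightarrow L_1(\M)$, which is only available when $\tau(1)<\infty$. In the semifinite case this step as written does not go through. However, this is harmless: once you know $P$ is a bounded \emph{projection} on $L_{1,\Psi}(\M)$ with range contained in $H_{1,\Psi}(\A)$ and kernel contained in $J(H_{1,\Psi}(\A)_0)$ (as your approximation argument shows), the decomposition is automatically direct, and the reverse inclusions for range and kernel follow from $P$ acting as the identity on $\A$ and as zero on $J(\A_0)$, which are dense in the respective pieces. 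So the first step is in fact redundant, and the paper accordingly omits it.
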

\begin{proof} Since the lower Boyd index $p_{L_{1,\Psi}}$  and upper Boyd index $q_{L_{1,\Psi}} $ of $L_{1,\Psi}(\Omega)$ are $\frac{1}{q_\varphi}$ and $\frac{1}{p_\varphi}$, respectively (see \cite[Theorem 4.2]{Ma1}).  Using \eqref{eq:weak index-strong index}, we get that $1 < p_{L_{1,\Psi}} \le q_{L_{1,\Psi}} <\8$.  If $\tau(1)<\8$, then  by \cite[Theorem 5]{BM}, we obtain the desired result. If $\tau(1)=\8$.  Choose that $1<p<p_{L_{1,\Psi}} \le p_{L_{1,\Psi}} <q<\8$. Since there is a bounded projection operator $P$  from $ L_{p}(\mathcal{M})$
onto $ H_{p}(\mathcal{A})$ and from $ L_{q}(\mathcal{M})$
onto $ H_{q}(\mathcal{A})$ (see \cite[Theorem 4.2]{Be1}),
by   Theorem 3.4 in \cite{DDP1}, we know that $P$ is a bounded projection from $L_{1,\Psi}(\mathcal{M})$
onto $H_{1,\Psi}(\A)$.
\end{proof}

\begin{theorem}\label{thm:dual-orlicz-hardy}
Let  $\Phi$ $1< a_{\Phi} \le b_{\Phi}<\8$.  Then
\beq\label{eq:dual-weak hardy spaces}
H_{\Phi,\8}(\A)^*=H_{1,\Psi}(\A)\oplus S_0|_{H_{\Phi,\8}(\A)}\oplus S_\8|_{H_{\Phi,\8}(\A)}.
\eeq
\end{theorem}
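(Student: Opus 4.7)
\emph{Plan.} The strategy parallels the derivation of the ambient dual formula \eqref{eq:dual-weak-orlicz}, refining its $L_{1,\Psi}(\M)$-component to $H_{1,\Psi}(\A)$ via the bounded projection $P:L_{1,\Psi}(\M)\to H_{1,\Psi}(\A)$ built in Proposition~\ref{pro:weak-hardy space}. Given $\ell\in H_{\Phi,\8}(\A)^*$, I first extend $\ell$ by Hahn--Banach to $\tilde\ell\in L_{\Phi,\8}(\M)^*$, and apply \eqref{eq:dual-weak-orlicz} to write
\[
\tilde\ell(x)=\tau(xy)+\ell_0(x)+\ell_\8(x),\qquad x\in L_{\Phi,\8}(\M),
\]
with $y\in L_{1,\Psi}(\M)$, $\ell_0\in S_0$, and $\ell_\8\in S_\8$. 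The restrictions $\ell_0|_{H_{\Phi,\8}(\A)}$ and $\ell_\8|_{H_{\Phi,\8}(\A)}$ automatically supply the last two summands of \eqref{eq:dual-weak hardy spaces}, so the real work concerns the regular piece $x\mapsto\tau(xy)$.

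For that piece I invoke Proposition~\ref{pro:weak-hardy space} to split $y=y_1+y_2^*$ with $y_1\in H_{1,\Psi}(\A)$ and $y_2\in H_{1,\Psi}(\A)_0$, and aim to show that the component $y_2^*\in J(H_{1,\Psi}(\A)_0)$ is annihilated by $H_{\Phi,\8}(\A)$ under the trace pairing. The vanishing
\[
\tau(xy_2^*)=0,\qquad x\in H_{\Phi,\8}(\A),\ y_2\in H_{1,\Psi}(\A)_0,
\]
once established, reduces the regular part of $\ell|_{H_{\Phi,\8}(\A)}$ to $x\mapsto\tau(xy_1)$, which I identify with the element $y_1\in H_{1,\Psi}(\A)$; boundedness of the induced functional on $H_{\Phi,\8}(\A)$ follows from the duality \eqref{eq:weak-dual}.

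To establish the vanishing I would first approximate $y_2$ in $L_{1,\Psi}(\M)$-norm by a sequence $(a_n)\subset\A_0$: such density of $\A_0$ in $H_{1,\Psi}(\A)_0$ is given in the finite case by Proposition~4.3 of \cite{Be} and in general follows from an interpolation argument in the spirit of Proposition~\ref{pro:weak-hardy space}. Then $\tau(xy_2^*)=\lim_n\tau(xa_n^*)$, and the combination of the defining identity $\tau(xa)=0$ for $a\in\A_0$, the tracial relation $\tau(xa_n^*)=\overline{\tau(a_nx^*)}$, and the subdiagonal identity $\E(bc)=\E(b)\E(c)$ on $\A$ should drive the limit to zero. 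Uniqueness then follows by lifting any identity $\tau(xy_1)+\ell_0(x)+\ell_\8(x)=0$ on $H_{\Phi,\8}(\A)$ back to $L_{\Phi,\8}(\M)$ through the given representatives and invoking the uniqueness already embedded in \eqref{eq:dual-weak-orlicz}, which forces $y_1=0$ and $\ell_0|_{H_{\Phi,\8}(\A)}=\ell_\8|_{H_{\Phi,\8}(\A)}=0$. The main obstacle throughout is the vanishing claim: because passing to the adjoint sends $\A_0$ into $J(\A_0)$, which is not part of the defining annihilator of $H_{\Phi,\8}(\A)$, the identity is not formal and genuinely requires the subdiagonal structure of $\A$ together with an approximation of elements of $H_{\Phi,\8}(\A)$ by members of $\A$ in a topology compatible with $L_{1,\Psi}(\M)$-convergence of the $a_n$'s.
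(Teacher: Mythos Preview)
Your overall architecture---Hahn--Banach extension, the ambient decomposition \eqref{eq:dual-weak-orlicz}, and then Proposition~\ref{pro:weak-hardy space}---is exactly the paper's. The error is in where you put the adjoint when you invoke Proposition~\ref{pro:weak-hardy space}, and it is not cosmetic: the vanishing you set out to prove is false.

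Concretely, you write the regular part as $x\mapsto\tau(xy)$ and split $y=y_1+y_2^*$ with $y_1\in H_{1,\Psi}(\A)$, $y_2\in H_{1,\Psi}(\A)_0$, and then aim for $\tau(xy_2^*)=0$ for all $x\in H_{\Phi,\8}(\A)$. Take $\M=L^\infty(\mathbb{T})$, $\A=H^\infty$, and $x=y_2=e^{i\theta}\in\A_0$. Then $x\in\A\subset H_{\Phi,\8}(\A)$, $y_2\in\A_0\subset H_{1,\Psi}(\A)_0$, but $\tau(xy_2^*)=\int_{\mathbb{T}}1\,d\theta=1\neq 0$. So no amount of subdiagonal identities or approximation will rescue that step. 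The same misplacement also breaks your identification of the surviving piece: the map $y_1\mapsto\big[x\mapsto\tau(xy_1)\big]$ annihilates all of $H_{1,\Psi}(\A)_0$ (by the very definition of $H_{\Phi,\8}(\A)$), so it is not an embedding of $H_{1,\Psi}(\A)$ into $H_{\Phi,\8}(\A)^*$.

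The fix is to apply Proposition~\ref{pro:weak-hardy space} to the \emph{other} variable. Write the regular part as $x\mapsto\tau(xy^*)$ (equivalently, decompose $y^*$ rather than $y$): with $y=h+z^*$, $h\in H_{1,\Psi}(\A)$, $z\in H_{1,\Psi}(\A)_0$, one gets
\[
\tau(xy^*)=\tau(xh^*)+\tau(xz),\qquad x\in H_{\Phi,\8}(\A).
\]
Now the needed vanishing is $\tau(xz)=0$, and this is \emph{immediate} from the defining condition $\tau(xa)=0$ for $a\in\A_0$ together with the density of $\A_0$ in $H_{1,\Psi}(\A)_0$ and the continuity of $z\mapsto\tau(xz)$ on $L_{1,\Psi}(\M)$ coming from \eqref{eq:weak-dual}. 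No extra ``subdiagonal structure'' or approximation of $x$ is required; the obstacle you describe in your last paragraph is an artifact of having conjugated the wrong factor. The remaining term $x\mapsto\tau(xh^*)$ then realizes $h\in H_{1,\Psi}(\A)$ as an element of $H_{\Phi,\8}(\A)^*$, which is precisely the paper's argument.
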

\begin{proof} It is clear that
\be
H_{1,\Psi}(\A)\oplus S_0|_{H_{\Phi,\8}(\A)}\oplus S_\8|_{H_{\Phi,\8}(\A)}\subset H_{\Phi,\8}(\A)^*.
\ee
Let $\ell\in H_{\Phi,\8}(\A)^*$. By Hahn-Banach theorem, there is a functional $\tilde{\ell}\in L_{\Phi,\8}(\M)^*$ such that $\ell=\tilde{\ell}|_{H_{\Phi,\8}(\A)}$. Using \eqref{eq:dual-weak-orlicz}, we get that
\be
\tilde{\ell}(x)=\tau(xy^*)+\tilde{\ell_1}(x)+\tilde{\ell_2}(x),\quad \forall x\in L_{\Phi,\8}(\M),
\ee
where $y\in L_{1,\Psi}(\M) $, $\tilde{\ell_1}\in S_0$ and $\tilde{\ell_2}\in S_\8$. Using Proposition \ref{pro:weak-hardy space}, we obtain that there exist $h\in H_{1,\Psi}(\A)$ and $z\in H_{1,\Psi}(\A)_0$ such that $y=h+z^*$. Hence,
\be
\tau(ay^*)=\tau(ah^*)+\tau(az)=\tau(ah^*),\quad a\in H_{\Phi,\8}(\A).
\ee
Therefore, $\ell=h+\tilde{\ell_1}|_{H_{\Phi,\8}(\A)}+\tilde{\ell_2}|_{H_{\Phi,\8}(\A)}$. From this follows \eqref{eq:dual-weak hardy spaces}.
\end{proof}

If $\Phi(t)=\frac{t^p}{p}\;(1<p<\8)$, then $\Psi(t)=\frac{t^q}{q}$ and  $\varphi(t)=\frac{1}{q^{1/q}}t^{1/q}$, where $\frac{1}{p}+\frac{1}{q}=1$.
Hence,
\be
N_0(x)=\limsup_{t\rightarrow0}q^{1/q}\frac{1}{t^{1/q}}\int_0^t\mu_s(x)ds=q^{1/q}\limsup_{t\rightarrow0}t^{1/p-1}\int_0^t\mu_s(x)ds,
\ee
and
\be N_\8(x)=\limsup_{t\rightarrow\8}q^{1/q}\frac{1}{t^{1/q}}\int_0^t\mu_s(x)ds=q^{1/q}\limsup_{t\rightarrow\8}t^{1/p-1}\int_0^t\mu_s(x)ds.
 \ee

\begin{corollary}\label{thm:dual-orlicz-hardy}
Let  $1< p<\8$.  Then
\beq\label{eq:dual-weak hardy spaces}
 H_{p,\8}(\A)^*=H_{1,q}(\A)\oplus S_0|_{H_{p,\8}(\A)}\oplus S_\8|_{H_{p,\8}(\A)}.
\eeq
\end{corollary}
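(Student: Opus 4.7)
The plan is to derive this corollary as a direct specialization of the preceding Theorem~\ref{thm:dual-orlicz-hardy}. Setting $\Phi(t)=t^p/p$ with $1<p<\8$, I would first verify the hypothesis $1<a_\Phi\le b_\Phi<\8$: a direct computation gives $t\Phi'(t)/\Phi(t)=p$ for every $t>0$, so $a_\Phi=b_\Phi=p\in(1,\8)$, and the theorem applies.

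Next, I would identify the spaces appearing in the theorem's statement with those in the corollary. Since $\Phi(t)=t^p/p$, the weak Orlicz quasi-norm $\|\cdot\|_{\Phi,\8}$ agrees up to a multiplicative constant with the weak $L_p$ quasi-norm $\|\cdot\|_{p,\8}$ introduced in Section~\ref{NcWeakLpSpa}; indeed, both satisfy the distributional characterization \eqref{eq:WeakLpNormMu}. Hence $L_{\Phi,\8}(\M)=L_{p,\8}(\M)$ with equivalent norms, so in particular $H_{\Phi,\8}(\A)=H_{p,\8}(\A)$. The complementary $N$-function is $\Psi(t)=t^q/q$ with $1/p+1/q=1$, and by the notational convention adopted just above the corollary one writes $L_{1,\Psi}(\M)=L_{1,q}(\M)$; therefore $H_{1,\Psi}(\A)=H_{1,q}(\A)$.

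Finally, substituting these identifications into the decomposition furnished by Theorem~\ref{thm:dual-orlicz-hardy} produces the desired formula
$H_{p,\8}(\A)^*=H_{1,q}(\A)\oplus S_0|_{H_{p,\8}(\A)}\oplus S_\8|_{H_{p,\8}(\A)}$.
I anticipate no genuine obstacle: the only subtle point is that the continuous seminorms $N_0$ and $N_\8$ used to define $S_0$ and $S_\8$ are built from $\varphi(t)=1/\Psi^{-1}(1/t)=q^{-1/q}t^{1/q}$, but the explicit formulas for $N_0$ and $N_\8$ displayed immediately before the corollary already encode precisely this dependence, so the transfer is purely mechanical.
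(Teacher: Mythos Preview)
Your proposal is correct and follows exactly the paper's approach: the paper does not give a separate proof of the corollary but simply computes, just before its statement, that for $\Phi(t)=t^p/p$ one has $\Psi(t)=t^q/q$ and $\varphi(t)=q^{-1/q}t^{1/q}$, and then states the corollary as an immediate specialization of Theorem~\ref{thm:dual-orlicz-hardy}. Your verification that $a_\Phi=b_\Phi=p$ makes the hypothesis check explicit, which the paper leaves to the reader.
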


\subsection*{Acknowledgement} T. B. Bekjan and M. Raikhan are partially supported by  project AP09259802 of the Science Committee of Ministry of Education and Science of the Republic of Kazakhstan.

\end{document}